\newtheorem{lemma}{Lemma}[section]
\newtheorem{theorem}[lemma]{Theorem}
\newtheorem{corollary}[lemma]{Corollary}
\theoremstyle{definition}
\newtheorem{example}[lemma]{Example}
\newtheorem{definition}[lemma]{Definition}
\newtheorem{fact}[lemma]{Fact}
\DeclareMathOperator{\Gl}{Gl}
\DeclareMathOperator{\F}{\mathbb{F}_q}
\DeclareMathOperator{\VI}{\mathcal{VI}}
\DeclareMathOperator{\Inj}{Inj}
\DeclareMathOperator{\CC}{\mathcal{C}}
\DeclareMathOperator{\Mon}{Mon}
\title{Dimension-independent statistics of $\Gl_n(\F)$ via character polynomials}
\author{Nir Gadish}
\address{Department of Mathematics, University of Chicago, Chicago, IL, USA}
\email{\href{mailto:nirg@math.uchicago.edu}{nirg@math.uchicago.edu}}
\begin{document}
	\begin{abstract}
		Picking permutations at random, the expected number of $k$-cycles is known to be $1/k$ and is, in particular, independent of the size of the permuted set. This short note gives similar size-independent statistics of finite general linear groups: ones that depend only on small minors. The proof technique uses combinatorics of categories, motivated by representation stability, and applies simultaneously to symmetric groups, finite linear groups and many other settings.
	\end{abstract}
	
	\maketitle
\vspace{-1em}
\section{Introduction}
It is well known that the expected number of fixed points of a random permutation is $1$, independent of the size of the permuted set. Perhaps less known is that the same is true for finite linear groups: the expected number of non-zero fixed vectors of a random matrix $T\in \Gl_n(\F)$ is $1$, independent of the dimension $n$ (see Corollary \ref{cor:fixed} below). This short note will demonstrate how the combinatorics of a category gives rise to a wide class of similar statistical invariants, whose moments are eventually independent of dimension. Similar considerations apply in vastly different contexts, giving `size'-independent statistics for natural sequences of groups, see \S3.

This article sets out to illuminate some of the properties and applications of the so-called \emph{generalized character polynomials} introduced in \cite{Ga} by specializing them to the specific case of finite general linear groups. Character polynomials are an algebra of class functions, defined simultaneously on an infinite collection of groups, and whose statistical properties relate the combinatorics and representation theory of the different groups -- hence they appear in the context of representation stability. It is the author's hope that the example will demonstrate the usefulness of this new character-theoretic tool, as well as confirming the efficacy of categorical considerations in combinatorics.

Statistics of finite matrix groups is a rich field with many effective techniques and applications to number theory, combinatorics and computer science (see e.g. \cite{Fulman} and the references therein). A typical question that one asks in this field is ``what is the probability that the characteristic polynomial of a randomly chosen matrix have a certain form?" and a typical answer is asymptotic in the size of the matrix. The kind of question that this article considers is different: while the characteristic polynomial depends on the entire matrix, we will focus on more local properties -- ones that depend only on small minors -- and our answers will be exact.


Let us motivate the invariants that shall be considered. Apart from the question of fixed points of a random permutation $\sigma\in S_n$, one can ask for the expected number of $d$-cycles, and the answer is again independent on $n$. This could be abstractly rephrased as the number of $\sigma$-invariant subsets of size $d$, to which $\sigma$ restricts to a $d$-cycle. A natural generalization to the setting of $\Gl_n(\F)$ is given as follows.
\begin{theorem}[\textbf{Stable statistics for $\Gl_\bullet(\F)$}]\label{thm:main}
	Fix a conjugacy class $C\subset  \Gl_d(\F)$. Then for a random $T\in \Gl_n(\F)$, the expected number of $d$-dimensional subspaces $W\leq \F^n$ for which $T(W)=W$ and $T|_W \in C$ is independent of $n$ once $n\geq d$. In particular, calculating the case $n=d$ gives this stable expectation as $\frac{|C|}{|\Gl_d(\F)|}$.
\end{theorem}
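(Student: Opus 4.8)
The plan is to compute the expectation directly by a double-counting (Fubini) argument, reducing the average over $\Gl_n(\F)$ to a count of pairs and then exploiting the transitivity of $\Gl_n(\F)$ on $d$-dimensional subspaces. First I would write, with $N(T)$ denoting the count in the statement,
$$
\frac{1}{|\Gl_n(\F)|}\sum_{T\in\Gl_n(\F)} N(T)
= \frac{1}{|\Gl_n(\F)|}\,\#\bigl\{(W,T): \dim W = d,\ T(W)=W,\ T|_W\in C\bigr\},
$$
swapping the order of summation so that the right-hand side counts pairs $(W,T)$ with $W$ a $d$-dimensional subspace, $T$ preserving $W$, and $T|_W\in C$. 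Since $\Gl_n(\F)$ acts transitively on the set of $d$-dimensional subspaces, and since $C$ is conjugation-invariant (so the condition $T|_W\in C$ is independent of the chosen identification $\Gl(W)\cong\Gl_d(\F)$), the number of admissible $T$ is the same for every $W$. Thus it suffices to fix the coordinate subspace $W_0=\F^d\oplus 0$, count the admissible $T$ there, and multiply by the number of $d$-dimensional subspaces, namely the Gaussian binomial coefficient $\binom{n}{d}_q$.

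The key step is to count the $T\in\Gl_n(\F)$ preserving $W_0$ with $T|_{W_0}\in C$. In block form relative to $\F^n=W_0\oplus\F^{n-d}$, preservation of $W_0$ forces $T=\left(\begin{smallmatrix} A & B\\ 0 & D\end{smallmatrix}\right)$, where $A=T|_{W_0}$ must lie in $C$, the block $D$ ranges over $\Gl_{n-d}(\F)$, and $B$ ranges freely over the $d\times(n-d)$ matrices. Hence there are exactly $|C|\cdot q^{d(n-d)}\cdot|\Gl_{n-d}(\F)|$ such matrices, and the total pair count is $\binom{n}{d}_q\,|C|\,q^{d(n-d)}\,|\Gl_{n-d}(\F)|$.

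It then remains to simplify
$$
\frac{\binom{n}{d}_q\,|C|\,q^{d(n-d)}\,|\Gl_{n-d}(\F)|}{|\Gl_n(\F)|}.
$$
Writing $|\Gl_m(\F)|=q^{\binom{m}{2}}\prod_{i=1}^m(q^i-1)$ and expanding $\binom{n}{d}_q=\tfrac{\prod_{i=n-d+1}^n(q^i-1)}{\prod_{i=1}^d(q^i-1)}$, all of the factors $\prod(q^i-1)$ cancel, while the powers of $q$ collapse via the identity $\binom{n}{2}=\binom{d}{2}+\binom{n-d}{2}+d(n-d)$, leaving exactly $\frac{|C|}{|\Gl_d(\F)|}$ with no dependence on $n$. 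The only genuine obstacle is bookkeeping: confirming that the block-triangular description captures every admissible $T$, and checking that the $q$-exponent identity yields the clean cancellation; conceptually the argument is just Fubini together with transitivity and the conjugation-invariance of $C$. (The categorical framework of \cite{Ga} would recast this as a structural statement about the category $\VI$, but the direct count already gives the exact, $n$-independent value.)
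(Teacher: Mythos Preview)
Your proof is correct. The paper takes a different route: it recognizes $X_C$ as a \emph{generalized character polynomial} for the category $\VI$ of finite-dimensional $\F$-vector spaces and linear injections, and then invokes \cite[Corollary~4.6]{Ga} as a black box to conclude that $\mathbb{E}_{\Gl_n}[X_C]$ is independent of $n$ once $n\ge d$. The underlying mechanism, spelled out in the paper's Theorem~\ref{thm:generalization}, is representation-theoretic: one realizes $X_C$ inside the character of an induced module $\mathbb{C}[\Mon(c,d)]\otimes_{G_c} M$ and uses transitivity of $G_d$ on monomorphisms to identify the $G_d$-coinvariants with $M_{G_c}$, a quantity visibly independent of $d$. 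Your argument is more elementary and entirely self-contained for $\Gl_n(\F)$---the block-upper-triangular parametrization of the stabilizer and the identity $\binom{n}{2}=\binom{d}{2}+\binom{n-d}{2}+d(n-d)$ do by hand what the coinvariant computation does abstractly---but it is specific to this linear setting. The paper's approach buys portability: the identical categorical statement applies verbatim to symmetric groups, finite symplectic groups, and any locally finite category with the requisite transitivity, which is precisely the point of \S3. Both arguments ultimately hinge on the transitivity of $\Gl_n(\F)$ on $d$-dimensional subspaces; you exploit it through an explicit orbit--stabilizer count, while the paper uses it to collapse an induced representation.
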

More generally, all joint higher moments of these random variables are eventually independent of $n$. Explicitly, define a random variable for every conjugacy class $C\subset  \Gl_d(\F)$ by
$$
X_C(T) = \#\{ W\leq \F^n \mid \dim W = d,\, T(W)=W \text{ and } T|_W\in C \}
$$
well-defined simultaneously on all $\Gl_n(\F)$.
\begin{theorem}\label{thm:products}
Let $X_{C_i}$ be the random variables corresponding to conjugacy classes $C_i\subset \Gl_{d_i}(\F)$ respectively. Then the expectation $\mathbb{E}_{\Gl_n}\left[ X_{C_1}\cdot\ldots \cdot X_{C_r} \right]$ is the same for all $n\geq d_1+\ldots+d_r$.
\end{theorem}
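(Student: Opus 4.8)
The plan is to use the method of moments directly. I would expand the product $X_{C_1}\cdots X_{C_r}$ as a sum of indicators over $r$-tuples of subspaces and interchange that sum with the average over $\Gl_n(\F)$. Writing $X_{C_i}(T)=\sum_{W_i}\mathbf{1}[\dim W_i=d_i,\ T(W_i)=W_i,\ T|_{W_i}\in C_i]$, the expectation becomes
\[
\mathbb{E}_{\Gl_n}\!\left[X_{C_1}\cdots X_{C_r}\right]=\frac{1}{|\Gl_n(\F)|}\sum_{(W_1,\ldots,W_r)}\#\{\,T\in\Gl_n(\F)\mid T(W_i)=W_i,\ T|_{W_i}\in C_i\ \forall i\,\},
\]
so the whole problem reduces to counting, for each configuration of subspaces, the matrices preserving it with the prescribed restrictions.

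First I would fix a tuple and set $V=W_1+\ldots+W_r$, with $m=\dim V\le d_1+\ldots+d_r=:D$. Any $T$ in the inner set preserves each $W_i$, hence preserves $V$, and its restriction $S:=T|_V\in\Gl(V)$ must itself preserve every $W_i$ and satisfy $S|_{W_i}\in C_i$. The structural heart of the argument is to count extensions of a fixed such $S$ to all of $\F^n$: choosing a complement $\F^n=V\oplus U$ and writing $T$ in block form with $T|_V=S$, the condition $T(V)=V$ forces the lower-left block to vanish, the upper-right block ranges freely over $\Hom(U,V)$, and the lower-right block must lie in $\Gl(U)$ (since $S$ is invertible). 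Thus the number of extensions is $q^{m(n-m)}\,|\Gl_{n-m}(\F)|$, and crucially this count is the same for every admissible $S$. This $S$-independence is the crux of the proof.

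Next I would organize the tuples by the subspace $V$ they span. Because $\Gl_n(\F)$ acts transitively on $m$-dimensional subspaces, the intrinsic count
\[
M_m:=\sum_{\substack{(W_1,\ldots,W_r):\,\dim W_i=d_i\\ W_1+\ldots+W_r=V}}\#\{\,S\in\Gl(V)\mid S(W_i)=W_i,\ S|_{W_i}\in C_i\,\}
\]
depends only on $m=\dim V$, not on the embedding $V\le\F^n$ nor on $n$. Summing over the $\binom{n}{m}_q$ subspaces of each dimension and inserting the extension count gives
\[
\mathbb{E}_{\Gl_n}\!\left[X_{C_1}\cdots X_{C_r}\right]=\sum_{m=0}^{D}\binom{n}{m}_q\,\frac{q^{m(n-m)}\,|\Gl_{n-m}(\F)|}{|\Gl_n(\F)|}\,M_m .
\]

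Finally I would simplify the $n$-dependent coefficient. Using the parabolic-index identity $\binom{n}{m}_q=\frac{|\Gl_n(\F)|}{q^{m(n-m)}|\Gl_m(\F)|\,|\Gl_{n-m}(\F)|}$, the entire prefactor telescopes to $\frac{1}{|\Gl_m(\F)|}$, leaving $\mathbb{E}_{\Gl_n}[X_{C_1}\cdots X_{C_r}]=\sum_{m=0}^{D} M_m/|\Gl_m(\F)|$, manifestly independent of $n$. The bound $n\ge D$ enters exactly once: it guarantees that subspaces of every dimension $m\le D$ embed in $\F^n$ (so the sum is complete) and that the index identity holds; for smaller $n$ the top terms are truncated, which is why the value only stabilizes at $n\ge D$. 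The main obstacles to verify carefully are precisely the $S$-independence of the extension count and the clean telescoping of the parabolic index; once both are secured the theorem follows, and one recognizes the stable value $\sum_m M_m/|\Gl_m(\F)|$ as the quantity already computed at $n=D$, recovering Theorem \ref{thm:main} in the case $r=1$. This direct computation is the concrete shadow of the categorical character-polynomial machinery invoked in the paper.
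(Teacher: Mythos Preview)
Your argument is correct and takes a genuinely different route from the paper. The paper's proof is purely structural: it invokes \cite[Corollary 3.9]{Ga} to assert that the product $X_{C_1}\cdots X_{C_r}$ is again a linear combination $\sum_j \lambda_j X_{D_j}$ of character polynomials with each $\deg X_{D_j}\le d_1+\cdots+d_r$, and then applies Theorem~\ref{thm:main} term by term. No counting is done. You instead unfold the expectation directly: stratify tuples $(W_1,\ldots,W_r)$ by their span $V=\sum W_i$, observe that the number of extensions of a fixed $S\in\Gl(V)$ to $\Gl_n(\F)$ is $q^{m(n-m)}|\Gl_{n-m}(\F)|$ regardless of $S$, and then cancel this against the Gaussian binomial via the parabolic-index identity to obtain the closed form $\sum_{m\le D} M_m/|\Gl_m(\F)|$. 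What each approach buys: the paper's is shorter and portable to any category where the algebra structure on character polynomials is known (hence the remark about pullbacks after the proof), but it outsources the real work to \cite{Ga}. Yours is self-contained, needs no external machinery, and yields an explicit formula for the stable moment; in effect you have carried out by hand, in the $\VI$ case, exactly the computation that underlies \cite[Corollary 3.9]{Ga}. The trade-off is that your argument is tailored to $\Gl_n(\F)$ and does not immediately transfer to, say, the symplectic example in \S3.
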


In particular, taking $C = \{1\} \subset \Gl_1(\F)$ gives a count of the number of fixed vectors $\neq 0$ of a random $T\in \Gl_n$. As the expected number of such is independent of $n$, it can be computed with $n=1$: picking $\lambda\in \F^\times$ randomly, there are $q-1$ non-zero fixed points if $\lambda= 1$ and $0$ otherwise.
\begin{corollary}\label{cor:fixed}
	The expected number of non-zero fixed vectors of a random $T\in \Gl_n(\F)$ equals $1$. The same is true when replacing ``fixed vectors" by ``eigenvectors with eigenvalue $\lambda\in \F^*$".
\end{corollary}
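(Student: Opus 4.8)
The plan is to realize the count of non-zero fixed vectors as a scalar multiple of one of the random variables $X_C$ from Theorem~\ref{thm:main}, taking $d=1$. First I would observe that a non-zero vector $v\in\F^n$ is fixed by $T$ precisely when the line $W=\langle v\rangle$ is $T$-invariant and $T|_W$ equals the identity of $\Gl_1(\F)$. Thus the lines on which $T$ restricts to the identity are exactly those counted by $X_C$ for the conjugacy class $C=\{1\}\subset\Gl_1(\F)$. Each such line contributes its $q-1$ non-zero vectors, and distinct lines meet only at the origin, so the number of non-zero fixed vectors equals $(q-1)\cdot X_{\{1\}}(T)$ exactly.

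Next I would invoke Theorem~\ref{thm:main} with $d=1$. The stable expectation is $\frac{|C|}{|\Gl_1(\F)|}$; since $C=\{1\}$ is a single element and $|\Gl_1(\F)|=q-1$, this equals $\frac{1}{q-1}$. Multiplying by the factor $q-1$ from the previous step gives
$$
\mathbb{E}_{\Gl_n}\left[(q-1)\,X_{\{1\}}\right]=(q-1)\cdot\frac{1}{q-1}=1,
$$
as claimed. Alternatively, since the expectation is independent of $n$ for $n\geq 1$, one may evaluate it directly at $n=1$: a uniformly random $T=\lambda\in\F^\times$ has $q-1$ non-zero fixed vectors when $\lambda=1$ and none otherwise, again yielding $\frac{1}{q-1}\cdot(q-1)=1$.

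For the eigenvalue statement I would repeat the argument verbatim with the conjugacy class $C=\{\lambda\}\subset\Gl_1(\F)$: an eigenvector with eigenvalue $\lambda$ spans a line on which $T$ acts by the scalar $\lambda$, so such eigenvectors are counted by $(q-1)\,X_{\{\lambda\}}$. Because $\Gl_1(\F)$ is abelian, $\{\lambda\}$ is a full conjugacy class with $|C|=1$, so the identical computation returns $1$. The only point requiring care is the bookkeeping in the first step --- correctly converting between the count of non-zero vectors and the count of invariant lines, and identifying $|C|$ and $|\Gl_1(\F)|$ --- after which the result is immediate from Theorem~\ref{thm:main}; there is no substantive obstacle.
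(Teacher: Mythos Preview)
Your proof is correct and follows the same approach as the paper: both identify the class $C=\{1\}\subset\Gl_1(\F)$, invoke Theorem~\ref{thm:main} to reduce to the case $n=1$, and then compute directly that a random scalar in $\F^\times$ fixes $q-1$ non-zero vectors with probability $\tfrac{1}{q-1}$. You are in fact slightly more careful than the paper in distinguishing between the count of fixed lines (which is $X_{\{1\}}$) and the count of non-zero fixed vectors (which is $(q-1)X_{\{1\}}$), but the substance of the argument is the same.
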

We remark that this corollary is a simple special case of the extensive calculation of Fulman-Stanton \cite{FS}, who compute all moments of this random variable using (complicated) generating functions. Their calculated moments are are indeed eventually independent of $n$, as predicted by Theorem \ref{thm:main}.

The proof of Theorem \ref{thm:main} given below uses the category of finite-dimensional $\F$-vector spaces, of which $\Gl_n(\F)$ are the automorphism groups. The reader will notice that the approach has nothing to do with linear algebra, and could be used to prove analogous results in vastly different contexts. For example, the same technique gives the analogous:
\begin{fact}[{\cite[Proposition 3.9]{CEF}}]
	Fix a conjugacy class $C \subset S_d$. Then for a random $\tau\in S_n$, the expected number of subsets $W\subset [n]$ of cardinality $d$ such that $\tau(W)=W$ and $\tau|_W \in C$ does not depend on $n$ once $n\geq d$. In particular, this expected number is $\frac{|C|}{d!}$.
\end{fact}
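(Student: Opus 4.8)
The plan is to mirror the proof of Theorem~\ref{thm:main}, transporting the argument from the category of finite-dimensional $\F$-vector spaces to the category of finite sets and injections. The key observation is that the quantity being computed -- the expected number of $\tau$-invariant subsets $W$ of size $d$ on which $\tau$ restricts to a permutation in the fixed conjugacy class $C$ -- is precisely the expectation of a character polynomial, and its eventual independence of $n$ follows from the same categorical stability phenomenon. So I would first check that the combinatorial framework underlying the $\Gl_n(\F)$ argument applies verbatim when $\Gl_n(\F)$ is replaced by $S_n$, the general linear groups by symmetric groups, and subspaces by subsets.

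Concretely, I would first define, for a random $\tau\in S_n$, the random variable $Y_C(\tau) = \#\{W\subset[n] : |W|=d,\ \tau(W)=W,\ \tau|_W\in C\}$, the exact analogue of $X_C$. The strategy is to express its expectation as a sum over $d$-element subsets: by linearity,
\[
\mathbb{E}_{S_n}[Y_C] = \sum_{\substack{W\subset[n]\\ |W|=d}} \mathbb{P}_{\tau\in S_n}\big(\tau(W)=W \text{ and } \tau|_W\in C\big).
\]
The probability in each summand does not depend on which subset $W$ of size $d$ we pick, by the $S_n$-symmetry permuting subsets, so the sum is $\binom{n}{d}$ times the common probability. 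One then computes that probability directly: a uniformly random $\tau$ fixing $W$ setwise decomposes as a pair $(\tau|_W, \tau|_{[n]\setminus W})$ with $\tau|_W$ uniform on $S_W\cong S_d$ and $\tau|_{[n]\setminus W}$ uniform on $S_{[n]\setminus W}$, so the conditional probability that $\tau|_W\in C$ given $\tau(W)=W$ is $|C|/d!$, while $\mathbb{P}(\tau(W)=W) = d!\,(n-d)!/n!$. Multiplying these against $\binom{n}{d}$ gives exactly $|C|/d!$, independent of $n$ once $n\geq d$.

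I expect the main conceptual point -- rather than obstacle -- to be the verification that the computation is genuinely $n$-independent, i.e. that the $\binom{n}{d}$ growth in the number of subsets is exactly cancelled by the $1/\binom{n}{d}$ decay in the per-subset probability. In the direct counting above this cancellation is transparent, but the categorical viewpoint explains \emph{why} it must happen: $Y_C$ is the evaluation of a degree-$d$ character polynomial, and the inner product of such a polynomial with the trivial representation stabilizes once $n$ is large, precisely the mechanism used for $\Gl_n(\F)$. Since the cited result is attributed to \cite[Proposition~3.9]{CEF}, I would note that the elementary double-counting argument suffices to establish the stated value $|C|/d!$, with the category of finite sets and injections providing the structural explanation that unifies it with the linear case.
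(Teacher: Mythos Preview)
Your proof is correct. The paper does not actually give its own proof of this Fact: it cites \cite[Proposition~3.9]{CEF} and remarks that there it is ``proved combinatorially---by counting the number of permutations of various kinds,'' which is precisely your direct double-counting argument. The paper's purpose in stating the Fact is rather to point out that its categorical argument (the proof of Theorem~\ref{thm:main}, or more generally Theorem~\ref{thm:generalization}) yields the same conclusion uniformly: specializing to the category of finite sets and injections, transitivity of $S_n$ on $\Mon([d],[n])$ gives the expectation $|C|/|S_d|=|C|/d!$ immediately from the case $n=d$, without computing any intermediate probabilities. Your elementary counting establishes the value with no machinery and makes the cancellation $\binom{n}{d}\cdot\frac{d!(n-d)!}{n!}=1$ explicit; the categorical route instead explains \emph{why} such a cancellation is forced and why the same phenomenon recurs for $\Gl_n(\F)$, $\operatorname{Sp}_{2n}(\F)$, and so on.
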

In \cite{CEF} this fact is proved combinatorially -- by counting the number of permutations of various kinds. It is somewhat comforting that a single argument produces the same result in the general linear setting as well as for permutations.

\section{Character polynomials for $Gl_\bullet(F_q)$ and proofs}
In \cite{Ga} the author attached a collection of class functions to a category satisfying a list of axioms, associated with projective representations of the category. It is further shown there that character inner products between these class functions stabilize. An example of a category to which the theory applies is $\VI$: of finite dimensional vector spaces over $\F$ and injective linear maps between them.

In Theorem \ref{thm:main} we make the observation that the stable character inner products for $\VI$ could alternatively be viewed as stabilizing statistics of the groups $\Gl_n(\F)$ themselves, and interpret the stabilization result explicitly in linear algebraic terms.

\begin{proof}[Proof of Theorem \ref{thm:main}]
Let us unpack the definitions in \cite{Ga}, specialized to the category of finite dimensional vector spaces over $\F$ and linear injections. For every two vector spaces $W,V$ consider the `binomial set'
$$
\binom{V}{W} := \Inj(W,V){/\Gl(W)}
$$
of subspaces in $V$ that are isomorphic to $W$. Here $\Inj(-,-)$ denotes the set of injections between two vector spaces, with its natural left (right) action of $\Gl(-)$ be post (pre-) composition. This quotient set is a natural extension of the classical binomial coefficient $\binom{n}{k}$. Denote the equivalence class of the injection $\iota$ by $[\iota]$. 

A transformation $T\in \Gl(V)$ fixes $[\iota]$ iff $T\circ \iota = \iota\circ S$ for some (unique) $S\in \Gl(W)$. For every conjugacy class $C\subset \Gl(W)$ we set
\begin{equation}\label{eq:X_definition}
X_C(T) = \#\left\{ [\iota]\in \binom{V}{W}^T \,\middle\vert\, T\circ \iota = \iota\circ S \text{ and } S\in C \right\}
\end{equation}
and declare that it has degree $\dim W$. These functions and their linear combinations, defined on all $\Gl(V)$ at the same time, are the specialization to the category $\VI$ of the notion to which \cite{Ga} refers as \emph{(generalized) character polynomials}.

It is clear that the set $\binom{V}{W}$ is the Grassmannian of $\F$-subspaces $W'\leq V$ of dimension $=\dim{W}$ along with its natural $\Gl(V)$-action. Furthermore, the relation $T\circ \iota = \iota\circ S$ amounts to saying that $T|_{W'}$ is conjugate to the transformation $S$ under the identification $\iota: W\rightarrow W'$. One sees that the function $X_C$ thus defined is the one presented in the introduction.

It is shown in \cite[Corollary 4.6]{Ga} that $\mathbb{E}_{\Gl_n}[X_C]$ does not depend on $n$ once $n\geq \deg(X_C)$. Theorem \ref{thm:main} follows.
\end{proof}

\begin{proof}[Proof of Theorem \ref{thm:products}]
    Now consider joint higher moments. It is shown in \cite[Corollary 3.9]{Ga} that the product of character polynomials is again a character polynomial, and that the product respects degree. Explicitly, if $X_{C_i}$ are character polynomials of respective degree $d_i$ for $i=1,2$, then 
    \begin{equation} \label{eq:expansion}
        X_{C_1}\cdot X_{C_2} = \sum_{j=1}^r \lambda_j X_{D_j}
    \end{equation}
    for scalars $\lambda_j\in \mathbb{C}$ and with every $X_{D_j}$ of degree $\leq d_1+d_2$. The result then follows by applying Theorem \ref{thm:main} to this linear combination.
\end{proof}
We remark that the expansion coefficients in Equation \eqref{eq:expansion} are encoded in the combinatorics of pullbacks and pushouts in the category $\VI$, as is the case for generalized character polynomials of any category (for full details see \cite[\S3.1]{Ga}). Computing these coefficients in general poses a nontrivial combinatorial problem already in the case of $\Gl_\bullet(\F)$ and of symmetric groups. For example, if we denote a conjugacy class by a matrix in Jordan form
    \[
    X_{(\lambda)}\cdot X_{(\mu)} = X_{\left(\begin{smallmatrix}\lambda & 0 \\ 0 & \mu \end{smallmatrix}\right)} \;\text{ and }\; {X_{(\lambda)}}^2 = X_{(\lambda)} + (q+1)qX_{\left(\begin{smallmatrix}\lambda & 0 \\ 0 & \lambda \end{smallmatrix}\right)}
    \]
    for $\lambda\neq \mu \in \F^{*}$.

\section{Generalizations}
    This paper interprets the techniques of \cite{Ga} as tools showing how the combinatorics of categories sheds light on stable statistics of their automorphism groups. Equi-expectation results similar to Theorem \ref{thm:main} hold in the following general context. Let $\CC$ be a locally-finite category (i.e. hom-sets all are finite). For every two objects $c$ and $d$ denote the monomorphisms from $c$ to $d$ by $\Mon(c,d)$ and the automorphism group of $d$ by $G_d$.
	\begin{definition}
	A \emph{$c$-shaped subobject} in $d$ is an orbit of $\Mon(c,d)/G_c$. The set of these is denoted by $\binom{d}{c}$, and comes with a natural $G_d$-action.
	\end{definition}

	\begin{theorem}[\textbf{Object independent statistics}]\label{thm:generalization}
		Suppose that $c$ and $d$ are two objects in a locally-finite category for which the composition action $G_d\curvearrowright \Mon(c,d)$ is transitive.
		
		Fix a conjugacy class $C\subset G_c$. Then picking a random $g\in G_d$ uniformly, the expected number of $c$-shaped subobjects $[f]\in \binom{d}{c}$ that are fixed by $g$ and on which $g\circ f = f\circ h$ for $h\in C$ is precisely $|C|/G_c$. In particular this expected number does not depend on $d$.
	\end{theorem}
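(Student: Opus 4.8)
The plan is to compute the expectation directly by a double-counting (Burnside-style) argument, with the transitivity hypothesis entering at exactly one point. Write $\mathbb{E}_{G_d}[X_C]$ for the quantity in question, where $X_C(g)$ counts the $c$-shaped subobjects $[f]\in\binom{d}{c}$ that are fixed by $g$ with associated conjugacy class $C$. First I would record the key well-definedness observation: if $g\circ f = f\circ h$, then $h$ is \emph{unique} because $f$ is a monomorphism (hence left-cancellable), and replacing the representative $f$ by $f\circ a$ for $a\in G_c$ conjugates $h$ to $a^{-1}ha$; since $C$ is a conjugacy class, the condition ``$h\in C$'' depends only on the orbit $[f]$, so $X_C$ is well-defined.

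The core of the argument is to pass from orbits to monomorphisms and swap the order of summation. Consider the set of pairs
\[
\mathcal{P} = \{(g,f)\in G_d\times\Mon(c,d) \mid g\circ f = f\circ h \text{ for some } h\in C\}.
\]
Because $f$ is mono, the right action of $G_c$ on $\Mon(c,d)$ is free, so every orbit $[f]$ has exactly $|G_c|$ representatives; and by the well-definedness above, for a fixed $g$ either all or none of these representatives satisfy the defining condition of $\mathcal{P}$. Hence summing the $f$-count over $g$ gives $|\mathcal{P}| = |G_c|\sum_{g\in G_d} X_C(g)$, so that $\mathbb{E}_{G_d}[X_C] = |\mathcal{P}|/(|G_d|\,|G_c|)$.

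It remains to count $|\mathcal{P}|$, and this is where transitivity is essential. Fixing $f\in\Mon(c,d)$ and $h\in C$, I would count the $g\in G_d$ with $g\circ f = f\circ h$. Since $f\circ h$ is again a monomorphism $c\to d$, transitivity of $G_d\curvearrowright\Mon(c,d)$ guarantees at least one solution $g_0$, and the full solution set is then the coset $g_0\cdot\mathrm{Stab}_{G_d}(f)$, of size $|\mathrm{Stab}_{G_d}(f)|$ — crucially \emph{independent of $h$}. By orbit–stabilizer applied to the transitive action, $|\mathrm{Stab}_{G_d}(f)| = |G_d|/|\Mon(c,d)|$. Summing over the $|C|\cdot|\Mon(c,d)|$ choices of $(h,f)$ then yields $|\mathcal{P}| = |C|\cdot|G_d|$, and substituting back gives $\mathbb{E}_{G_d}[X_C] = |C|/|G_c|$, manifestly independent of $d$.

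The one genuine obstacle is the existence half of the $(f,h)\mapsto g$ count: without transitivity, for a given $f$ and $h$ there may be no $g\in G_d$ implementing $g\circ f = f\circ h$, and the fiber sizes could vary, so the clean identity $|\mathcal{P}| = |C|\cdot|G_d|$ would break. Everything else is formal bookkeeping. I would also verify that $\mathrm{Stab}_{G_d}(f)$ has size independent of the chosen $f$ — true since all point-stabilizers of a transitive action are conjugate — which is precisely what lets the count of $\mathcal{P}$ factor so cleanly.
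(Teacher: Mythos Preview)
Your proof is correct, and it takes a genuinely different route from the paper's argument. The paper proceeds via representation theory: it realizes the functions $X_C$ as the pieces of the character of the induced $G_d$-representation $\mathbb{C}[\Mon(c,d)]\otimes_{G_c} M$, then uses the transitivity hypothesis to identify the $G_d$-coinvariants of this module with the $G_c$-coinvariants $M_{G_c}$; since a character average computes the dimension of coinvariants, the averages over $G_d$ and $G_c$ agree, and orthogonality of characters of $G_c$ lets one isolate each $X_C$ separately. Your argument, by contrast, is a purely combinatorial Burnside-style double count of the triples $(g,f,h)$ with $g\circ f = f\circ h$ and $h\in C$, using transitivity only to guarantee that the fiber over each $(f,h)$ is a nonempty coset of the point-stabilizer. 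Your approach is more elementary and self-contained, needing no characters, no field $\mathbb{C}$, and no orthogonality relations; the paper's approach, while heavier, has the virtue of situating the result inside the character-polynomial framework and explaining \emph{why} the $X_C$ arise naturally as characters of induced representations --- which is what one needs for the paper's broader aims (e.g., the multiplicative structure in Theorem~\ref{thm:products}).
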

	\begin{proof}
        The set $X:= \Mon(c,d)$ of monomorphisms, with its two group actions of $G_c$ and $G_d$ by (pre-)post-composition gives a procedure for promoting $G_c$-representations to ones of $G_d$, say over $\mathbb{C}$
        \[
        (G_c\curvearrowright M) \longmapsto G_d\curvearrowright \mathbb{C}[X]\otimes_{G_c} M.
        \]
        A straightforward calculation shows that the character of such an induced $G_d$-representation is
        \[
        \sum_{C\in \operatorname{conj}(G_c)} \chi_M(C) X_C
        \]
        where $X_C$ are the obvious generalization of \eqref{eq:X_definition} to this case.

        Since the action $G_d$ on $X$ is assumed to be transitive, one gets the coinvariant quotient
        \[
            \left( \mathbb{C}[X]\otimes_{G_c} M \right)_{G_d} \cong \mathbb{C}[G_d\backslash X]\otimes_{G_c} M = \mathbb{C}\otimes_{G_c} M \cong M_{G_c}.
        \]
        It is then classical that a character average $\frac{1}{|G|}\sum_{g\in G}\chi_V(g)$
        computes the dimension of the coinvariant quotient $V_G$. Thus starting with any $G_c$-representation $M$, the corresponding character averages are the same for $G_d$ and for $G_c$. Orthogonality of the character values of $G_c$ implies that the same equality already holds for every function $X_C$ separately. This is the claim we set out to prove.
	\end{proof}
	\begin{example}[\textbf{Finite symplectic groups}]
        Theorem \ref{thm:generalization} applies to the category of finite dimensional symplectic $\F$-vector spaces and linear isometries. One gets the same conclusion of Theorem \ref{thm:main}, but with the groups $\Gl_n(\F)$ replaced by $\operatorname{Sp}_{2n}(\F)$ and with $W\leq \F^{2n}$ ranging over symplectic subspaces.
        
        Note that Theorem \ref{thm:products} is not expected to hold, since the category in question does not have pullbacks (e.g. intersections of symplectic subspaces might not be symplectic), which was a necessary ingridient in \cite[Corollary 3.9]{Ga}.
	\end{example}

\end{document}